\DeclarePairedDelimiter{\abs}{\lvert}{\rvert}
\DeclareMathOperator{\disc}{disc}
\newcommand{\vF}{\mathbb{F}}
\newcommand{\cS}{\mathcal{S}}
\newcommand{\cL}{\mathcal{L}}
\newcommand{\cI}{\mathcal{I}}
\newcommand{\cO}{\mathcal{O}}
\newcommand{\Ansq}{\mathcal{N}}
\newcommand{\Lnsq}{\mathcal{L}}
\newcommand{\Airr}{\mathcal{M}}
\newcommand{\Lirr}{\mathcal{I}}
\newcommand{\init}{\mathfrak{I}}
\newcommand{\dist}[1]{{[#1]}}
\newcommand{\reg}[1]{{\{#1\}}}
\newtheorem{theorem}{Theorem}[section]
\newtheorem{lemma}[theorem]{Lemma}
\newtheorem{proposition}[theorem]{Proposition}
\newtheorem{corollary}[theorem]{Corollary}
\theoremstyle{definition}
\newtheorem{definition}[theorem]{Definition}
\theoremstyle{remark}
\newtheorem{remark}[theorem]{Remark}
\newtheorem{example}[theorem]{Example}
\numberwithin{equation}{section}
\begin{document}

\title[Irreducible compositions of degree two polynomials]{Irreducible compositions of degree two polynomials over finite fields have regular structure}

\author[A. Ferraguti]{Andrea Ferraguti}
\address{University of Cambridge\\
DPMMS\\
Centre for Mathematical Sciences\\
Wilberforce Road, Cambridge, CB3 0WB, UK\\
}
\email{af612@cam.ac.uk}

\author[G. Micheli]{Giacomo Micheli}
\address{Mathematical Institute\\
University of Oxford\\
Woodstock Rd\\ 
Oxford OX2 6GG, United Kingdom
}
\email{giacomo.micheli@maths.ox.ac.uk}

\author[R. Schnyder]{Reto Schnyder}
\address{Institute of Mathematics\\
University of Zurich\\
Winterthurerstrasse 190\\
8057 Zurich, Switzerland\\
}
\email{reto.schnyder@math.uzh.ch}

\thanks{The first author was supported by Swiss National Science Foundation grant number 168459. The Second Author is thankful to Swiss National Science Foundation grant number 161757 and 171248.}
\subjclass[2010]{11T06,	68Q45}

\keywords{Finite Fields; Automata; Irreducible Polynomials; Semigroups; Graphs.}

\begin{abstract}
Let $q$ be an odd prime power and $D$ be the set of monic irreducible polynomials in $\vF_q[x]$ which can be written as a composition of monic degree two polynomials. In this paper we prove that $D$ has a natural regular structure by showing that there exists a finite automaton having $D$ as accepted language. Our method is constructive.
\end{abstract}

\maketitle

\section{Introduction}
\label{sec:introduction}
It has been of great interest in recent years the study of irreducible
polynomials which can be written as composition of degree two polynomials (see
for example
\citep{bib:ahmadi2009note,
bib:ahmadi2012stable,
ferraguti2016sets,
bib:gomez2010estimate,
bib:gomez2017complexity,
bib:heathbrownmicheli,
jones2007iterated,
jones2008density,
bib:jones2012iterative,
bib:jones2012settled,
bib:ostafe2010length}).
Such polynomials are also used in other contexts, see for example Rafe Jones' construction
of irreducible polynomials reducible modulo every prime
\cite{bib:jones2012iterative} or the proof of \cite[Conjecture
1.2]{bib:andrade2013special} in \cite{bib:ferraguti2016existence}. In this
paper we explain how the theory of this class of polynomials completely fits in
a general context which allows the use of powerful machinery coming from the
 theory of finite automata (in characteristic different from $2$).
In fact, we show that some irreducibility questions over finite fields can be
translated into automata theoretical ones (see
Definition~\ref{def:automaton_nonsquare} and Theorem~\ref{thm:irred_regular}).
As a side result, we also obtain that the set of irreducible polynomials which
can be written as the composition of degree two polynomials is naturally endowed
with a regular structure given by Theorem~\ref{thm:disjoint}.

Let $\vF_q$ be a finite field of odd characteristic and let $\cS \subset \vF_q[x]$ be a set of monic degree two polynomials. 
In this paper, we consider the set $\cS$ to be an alphabet, and a word $f_1
\cdots f_k \in \cS^*$ corresponds to the composition $f_1 \circ \cdots \circ
f_k \in \vF_q[x]$.
The empty word naturally corresponds to $x$.
Let $\Lirr \subset \cS^*$ be the language of words whose corresponding
compositions are irreducible.
Our goal is to show that $\Lirr$ is a regular language by  providing an automaton that accepts it.
The entire theory lifts to local fields under the assumption that the set $\cS$ is finite and none of its elements has discriminant in the maximal ideal of the local field.

\section{Distinguished sets and freedom}
We include in this section some elementary facts concerning the freedom of the monoid generated by a finite set of irreducible degree two polynomials. These results will be needed in Section~\ref{sec:automaton_irreducible}.
Each polynomial $f \in \cS$ can be uniquely written as $f = (x - a_f)^2 - b_f$
for some $a_f,b_f\in \vF_q$.
We define $D_\cS = \{b_f : f \in \cS\}$ to be the \emph{distinguished set} of
$\cS$.

We denote by $\cS^*$ the set of words over the alphabet $\cS$, so $\cS^*$ is
the free monoid generated by the symbols in $\cS$. Let $C_{\cS}\subseteq \vF_q[x]$ be the
compositional monoid generated by $\cS$, that is the smallest subset of
$\vF_q[x]$ containing $\cS$ and $x$ which is closed by composition.

We will denote by $\pi$ the natural surjective morphism of monoids $\cS^*\longrightarrow C_{\cS}$ which maps a word $f_1f_2\ldots f_k\in\cS^*$ to the composition $f_1\circ f_2\circ\ldots\circ f_k\in\vF_q[x]$.
Notice that, for an element $f\in \cS$ we will denote by $f^{(n)}$ the $n$-fold composition of $f$ with itself.

For $b\in D_\cS$, we define $A_b$ as the subset of all $a$ in $\vF_q$ such that
there exists $f\in \cS$ with $f=(x-a)^2-b$.
For any of the $A_b$, we define the difference set
\[A_b-A_b=\{a-a': a,a'\in A_b\}.\]
We can define a relation $\sim$ on $\cS^*$ by setting $u\sim w$ if there exists $\ell\in \bigcup_{b\in D_\cS} \left(A_b-A_b\right)$ for which 
$\pi(u)+\ell=\pi(w)$. This relation is symmetric and reflexive but not transitive, unless  $\bigcup_{b\in D_\cS} \left(A_b-A_{b}\right)$ is an additive subgroup of $\vF_q$.

In this section we provide a computable condition to establish whether $C_\cS$
is a free monoid, which will be needed later on. 
\begin{proposition}\label{thm:relation}
Let $u,v$ be words of $\cS^*$ of equal length $n \ge 1$. Let $u',v'$ be the $(n-1)$-suffixes of $u$ and $v$ respectively. Then
\begin{enumerate}
\item[(i)] $\pi(u)=\pi(v)$ implies  $u'\sim v'$
\item[(ii)] $u\sim v$ if and only if  $\pi(fu)=\pi(gv)$ for some $f,g\in \cS$.
\end{enumerate}
\end{proposition}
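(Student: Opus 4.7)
The plan is to reduce both parts to an explicit polynomial identity obtained by unfolding $\pi$ on a single prepended letter. Writing $u = fu'$ and $v = gv'$ with $f = (x-a)^2 - b$ and $g = (x-a')^2 - b'$, and setting $P := \pi(u')$, $Q := \pi(v')$, the hypothesis of (i) rewrites as $(P-a)^2 - b = (Q-a')^2 - b'$ in $\vF_q[x]$. Factoring the difference of squares turns this into
\[
\bigl((P-a) - (Q-a')\bigr)\cdot\bigl((P-a) + (Q-a')\bigr) = b - b'.
\]

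For the main case $n \ge 2$, both $P$ and $Q$ are monic of degree $2^{n-1} \ge 2$, so the sum factor $P + Q - (a+a')$ has leading coefficient $2 \ne 0$ (this is the only point at which $\operatorname{char}\vF_q$ being odd is used) and is therefore a polynomial of positive degree. Since $\vF_q[x]$ is a domain and the right-hand side is a constant, this factor cannot be a nonzero constant, forcing $b = b'$ and then the difference factor to vanish. The latter rewrites as $\pi(v') - \pi(u') = a' - a$ with $a, a' \in A_b$, which is precisely $u' \sim v'$. The degenerate case $n = 1$ is immediate because $u' = v' = \varepsilon$ and $0 = a_f - a_f \in A_{b_f} - A_{b_f}$ for any $f \in \cS$.

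The backward direction of (ii) is a direct corollary of (i): if $\pi(fu) = \pi(gv)$, equal degrees of the compositions force $\lvert fu\rvert = \lvert gv\rvert$, and applying (i) to the words $fu, gv$ identifies their $(\lvert fu\rvert - 1)$-suffixes $u, v$ as $\sim$-related. For the forward direction, unfold the definition of $\sim$: given $\ell = a' - a \in A_b - A_b$ with $\pi(u) + \ell = \pi(v)$, set $f := (x-a)^2 - b$ and $g := (x-a')^2 - b$, both of which lie in $\cS$ by definition of $A_b$. A direct substitution yields
\[
\pi(gv) = (\pi(v) - a')^2 - b = (\pi(u) + (a'-a) - a')^2 - b = (\pi(u) - a)^2 - b = \pi(fu).
\]

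The one substantive step is the integral-domain argument in (i): one must ensure that the sum factor $P + Q - (a+a')$ is genuinely of positive degree, which fails in characteristic $2$ because its leading coefficient $2$ would vanish. This is the technical reason the odd-characteristic hypothesis is in force throughout. Everything else is either direct substitution or a routine unfolding of definitions.
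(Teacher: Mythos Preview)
Your argument is correct and follows the same approach as the paper: factor the difference of squares, use that the sum factor $P+Q-(a+a')$ has positive degree (leading coefficient $2$) to force the difference factor to vanish and $b=b'$, then read off $u'\sim v'$; part (ii) is handled identically by unfolding $\sim$ for the forward direction and invoking (i) for the converse. Your separate treatment of $n=1$ is unnecessary (the sum factor $2x-(a+a')$ still has positive degree), and the sentence ``this factor cannot be a nonzero constant'' is phrased oddly---the point is simply that the sum factor is nonzero of positive degree, so the difference factor must vanish---but the mathematics is sound.
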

\begin{proof}
Let us first prove (i). Suppose $\pi(u)=\pi(v)$ and let us write
\[\pi(u)=(h_1-a)^2-b=(h_2-a')^2-b'=\pi(v)\]
for $h_1 = \pi(u')$, $h_2 = \pi(v')$.
Then we have $(h_1-a-h_2+a')(h_1+h_2-a-a')=b-b'$. Since $h_1+h_2-a-a'$ has positive degree, this forces $b=b'$ and $h_1-a-h_2+a'=0$. Now it is clear that $a,a'\in A_b$, which implies $a'-a\in A_b-A_b$, and then $u'\sim v'$.

Let us now prove (ii). If $u\sim v$, by definition we have $\pi(u)-a=\pi(v)-a'$
for $a-a'\in A_b-A_b$ for some $b$. Now, by squaring and subtracting $b$ on both
sides of the equality we get $f(\pi(u))=g(\pi(v))$ for some $f,g\in \cS$, and
hence $\pi(fu) = \pi(gv)$.
Vice versa, if there exists $f,g\in \cS$ such that $\pi(f u)=\pi(g v)$, then (i)
applies.
\end{proof}

\begin{lemma}\label{thm:relation_max_min}
	Let $u,v$ be words of $\cS^*$ of equal length $n \ge 1$.
	If $\abs{D_{\cS}}=\abs{\cS}$ or $\abs{D_\cS}=1$, then we have that
	$u \sim v$ if and only if $\pi(u) = \pi(v)$.
\end{lemma}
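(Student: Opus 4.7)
The forward direction $\pi(u) = \pi(v) \Rightarrow u \sim v$ is immediate, since $0 \in A_b - A_b$ for every $b \in D_\cS$. For the converse I will split on the two hypotheses.

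Under $|D_\cS| = |\cS|$, the correspondence $f \mapsto b_f$ is bijective, so each $A_b$ is a singleton and $A_b - A_b = \{0\}$. Hence $\bigcup_{b \in D_\cS}(A_b - A_b) = \{0\}$ and $u \sim v$ directly forces $\pi(u) = \pi(v)$.

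Under $|D_\cS| = 1$, writing $D_\cS = \{b\}$, the plan is to reuse the degree/factorization idea from the proof of Proposition~\ref{thm:relation}(i), but applied to the outermost letters rather than the innermost ones. I write $u = h u'$ and $v = k v'$ with $h = (x-\alpha)^2 - b$ and $k = (x-\beta)^2 - b$ (the common distinguished value forces this shape), and suppose $\pi(u) + \ell = \pi(v)$ for some $\ell \in A_b - A_b$. Using $\pi(u) = (\pi(u') - \alpha)^2 - b$ and the analogous identity for $v$, I get
$$(\pi(u') - \alpha)^2 - (\pi(v') - \beta)^2 \;=\; \pi(u) - \pi(v) \;=\; -\ell,$$
and then factor the left-hand side as a difference of squares. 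The factor $\pi(u') + \pi(v') - \alpha - \beta$ has positive degree, because $\pi(u')$ and $\pi(v')$ are monic of degree $2^{n-1}$ and $q$ is odd (this also handles the edge case $n=1$, where the factor is $2x - \alpha - \beta$). Since the product equals a constant, the complementary factor must vanish; this forces $\ell = 0$ and hence $\pi(u) = \pi(v)$.

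The main subtlety I expect is conceptual rather than computational: $\ell$ in the hypothesis is \emph{a priori} any element of $A_b - A_b$, with no pre-ordained relation to the outermost letters $\alpha, \beta$ of $u$ and $v$; the degree argument nevertheless pins $\ell$ down to $0$ intrinsically. Once this is recognized, the odd-characteristic assumption carries the rest of the proof cleanly and both cases reduce to the same conclusion.
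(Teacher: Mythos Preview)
Your proof is correct and essentially identical to the paper's: the trivial direction, the singleton argument for $|D_\cS|=|\cS|$, and the difference-of-squares degree argument for $|D_\cS|=1$ all match. Your remark contrasting with Proposition~\ref{thm:relation}(i) is slightly off---that proof also peels off the outermost letter and works with $(n-1)$-suffixes---but this does not affect the argument.
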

\begin{proof}
One direction is trivial: If $\pi(u)=\pi(v)$ then $u\sim v$.
For the other direction, we look at the two cases separately.

In the case $\abs{D_{\cS}}=\abs{\cS}$, it follows from $u \sim v$ that $\pi(u) -
\pi(v) = c \in A_b - A_b$ for some $b \in D_\cS$.
However, $A_b$ consists of only one element, so $c = 0$.

For the case $\abs{D_{\cS}}=1$, assume that $u \sim v$, so $\pi(u)=\pi(v) + c$
for some $c\in \vF_q$.
Let $u',v'$ be the $(n-1)$-suffixes of $u$ and $v$.
Then, since $\abs{D_\cS}=1$, we have that $(\pi(u')-a_1)^2-(\pi(v')-a_2)^2=c$
for some $a_1,a_2\in\vF_q$.
As $c$ is constant, this forces $(\pi(u')-a_1)-(\pi(v')-a_2)=0$, which in turn
forces $c=0$ and hence $\pi(u) = \pi(v)$.
\end{proof}

The following proposition shows that the freedom of the monoid is ensured whenever $D_\cS$ is either maximal or minimal. 

\begin{proposition}\label{thm:freedom_max_min}
If $\abs{D_{\cS}}=\abs{\cS}$ or $\abs{D_\cS}=1$, then $C_{\cS}\cong \cS^*$.
\end{proposition}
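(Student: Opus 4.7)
The plan is to show that $\pi \colon \cS^* \to C_\cS$ is injective, since it is always a surjective monoid morphism. I would proceed by induction on word length, leveraging Proposition~\ref{thm:relation} to peel off the leading letter and Lemma~\ref{thm:relation_max_min} to upgrade the relation $\sim$ to equality under $\pi$.

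First, I would observe that $\pi$ preserves degree in a rigid way: if $w\in\cS^*$ has length $k$, then $\deg\pi(w)=2^k$. Hence $\pi(u)=\pi(v)$ forces $\abs{u}=\abs{v}$, call this common length $n$. The induction runs on $n$, the base $n=0$ being trivial (both are the empty word). For the step, write $u=fu'$ and $v=gv'$ with $f,g\in\cS$ and $u',v'$ of length $n-1$, and set $f=(x-a)^2-b$, $g=(x-a')^2-b'$, $h_1=\pi(u')$, $h_2=\pi(v')$. The computation performed in the proof of Proposition~\ref{thm:relation}(i) (factoring $(h_1-a)^2-(h_2-a')^2=b-b'$) already yields $b=b'$ and $h_1-h_2=a-a'$, which will be the starting point of both cases.

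In the case $\abs{D_\cS}=\abs{\cS}$, the set $A_b$ is a singleton for every $b\in D_\cS$, so $a,a'\in A_b$ forces $a=a'$, whence $f=g$ and $h_1=h_2$, i.e.\ $\pi(u')=\pi(v')$. The inductive hypothesis on length $n-1$ then gives $u'=v'$, so $u=v$. In the case $\abs{D_\cS}=1$, equality $b=b'$ is automatic; the relation $h_1-h_2=a-a'$ with $a,a'\in A_b$ exactly says $u'\sim v'$. For $n\ge 2$, Lemma~\ref{thm:relation_max_min} applied to $u',v'$ upgrades $u'\sim v'$ to $\pi(u')=\pi(v')$, from which $a=a'$ follows (hence $f=g$) and induction finishes as before. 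The only borderline is $n=1$, where $u'$ and $v'$ are empty so $h_1=h_2=x$ is forced directly, giving $a=a'$ without invoking the lemma.

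The main subtlety I expect is exactly this bookkeeping at $n=1$ in the minimal case: Lemma~\ref{thm:relation_max_min} requires length $\ge 1$, so one cannot blindly apply it to the $(n-1)$-suffixes when $n=1$. Fortunately, the difference $h_1-h_2$ is a constant ($0$) in that situation, so $a=a'$ is immediate and the induction closes without circularity. Everything else is a clean bookkeeping of the identity $h_1-h_2=a-a'$ extracted from Proposition~\ref{thm:relation}(i), combined with the singleton/transitivity dichotomy provided by Lemma~\ref{thm:relation_max_min}.
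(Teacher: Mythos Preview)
Your proof is correct and follows essentially the same route as the paper's: the paper also disposes of length~$1$ separately, then applies Proposition~\ref{thm:relation}(i) followed by Lemma~\ref{thm:relation_max_min} to reduce to a shorter word, phrased as a minimal-counterexample argument rather than forward induction. Your explicit handling of the $n=1$ boundary and of the equality $f=g$ is more careful than the paper's terse version, but the underlying argument is identical.
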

\begin{proof}
Clearly, a polynomial of degree two in $C_{\cS}$ cannot have two distinct writings in terms of compositions.
Let $F$ be a polynomial in $C_\cS$ of minimal degree with two different writings,
i.e.\@ such that $F=\pi(fu)=\pi(gv)$ for $f,g\in \cS$ and $u, v\in \cS^*$ of
positive length.
From $\pi(fu)=\pi(gv)$ one deduces by Proposition~\ref{thm:relation} that $u\sim v$.
Lemma~\ref{thm:relation_max_min} now gives $\pi(u)=\pi(v)$, which implies  $u=v$
by the minimality of $F$.
\end{proof}

\begin{corollary}\label{thm:freedom2}
If $\abs{\cS}=2$ then $C_{\cS}\cong \cS^*$.
\end{corollary}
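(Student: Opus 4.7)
The plan is to reduce immediately to Proposition~\ref{thm:freedom_max_min}. Since $D_\cS = \{b_f : f \in \cS\}$ is the image of $\cS$ under $f \mapsto b_f$, we always have $1 \le \abs{D_\cS} \le \abs{\cS}$. With $\abs{\cS}=2$, the only possibilities are $\abs{D_\cS}=1$ (both polynomials share the same $b$-value) or $\abs{D_\cS}=2=\abs{\cS}$ (the two $b$-values differ). Either way, one of the two hypotheses of Proposition~\ref{thm:freedom_max_min} is satisfied, and the conclusion $C_\cS\cong\cS^*$ follows at once.

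There is essentially no obstacle here: the corollary is a pure case-analysis consequence of the preceding proposition, exploiting the fact that a two-element set cannot produce a distinguished set of intermediate cardinality. The only thing worth spelling out is the trivial observation that $\cS\ni f\mapsto b_f$ is a surjection onto $D_\cS$, so that $\abs{D_\cS}\le\abs{\cS}=2$, ruling out any third case.
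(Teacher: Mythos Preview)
Your proof is correct and follows exactly the paper's approach: observe that $\abs{D_\cS}\in\{1,2\}$ when $\abs{\cS}=2$, then invoke Proposition~\ref{thm:freedom_max_min}. The paper's own proof is the one-line remark that $\abs{D_\cS}=1$ or $\abs{D_\cS}=\abs{\cS}=2$, which is precisely your argument.
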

\begin{proof}
Immediate by observing that $\abs{D_\cS}=1$ or $\abs{D_\cS}=\abs{\cS}=2$.
\end{proof}

\section{An automaton for irreducible compositions}
\label{sec:automaton_irreducible}

\subsection{Capelli's Lemma}
\label{sec:capellis_lemma}

In this subsection we describe the basic tools needed to establish the main result. We start with a well known result by Capelli, which gives a necessary and sufficient criterion to control the irreducibility of the composition of two polynomials.
\begin{lemma}[Capelli's Lemma]\label{thm:capelli}
	Let $K$ be a field and $f,g \in K[x]$ polynomials.
	Let $\beta \in \overline{K}$ be a root of $g$.
	Then, $g \circ f$ is irreducible over $K$ if and only if $g$ is irreducible
	over $K$ and $f - \beta$ is irreducible over $K(\beta)$.
\end{lemma}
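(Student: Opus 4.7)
The plan is to exploit the tower of fields $K \subseteq K(\beta) \subseteq K(\alpha)$, where $\alpha$ is a root of $g \circ f$ and $\beta = f(\alpha)$ is a root of $g$. Most of the work will be degree counting using multiplicativity $[K(\alpha):K] = [K(\alpha):K(\beta)]\cdot[K(\beta):K]$.

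For the forward direction, suppose $g \circ f$ is irreducible over $K$. First, $g$ must be irreducible: otherwise a nontrivial factorization $g = g_1 g_2$ in $K[x]$ would yield $g \circ f = (g_1 \circ f)(g_2 \circ f)$, contradicting irreducibility, since composition on the right preserves the degrees of the factors. Next, fix a root $\alpha \in \overline{K}$ of $g \circ f$ and set $\beta = f(\alpha)$, which is a root of $g$. Irreducibility of $g \circ f$ gives $[K(\alpha):K] = \deg g \cdot \deg f$, while irreducibility of $g$ gives $[K(\beta):K] = \deg g$. Multiplicativity in the tower then forces $[K(\alpha):K(\beta)] = \deg f$. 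Since $\alpha$ is a root of $f - \beta \in K(\beta)[x]$ and this polynomial has degree exactly $\deg f$, it must be (a scalar multiple of) the minimal polynomial of $\alpha$ over $K(\beta)$, and in particular irreducible.

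For the reverse direction, assume $g$ is irreducible over $K$ with root $\beta$, and $f - \beta$ is irreducible over $K(\beta)$. Choose $\alpha$ in some extension with $f(\alpha) = \beta$; then $\alpha$ is a root of $g \circ f$. By irreducibility of $f - \beta$ over $K(\beta)$, we have $[K(\beta)(\alpha):K(\beta)] = \deg f$, and by irreducibility of $g$ over $K$, $[K(\beta):K] = \deg g$. Crucially, $\beta = f(\alpha) \in K(\alpha)$, hence $K(\alpha,\beta) = K(\alpha)$, and multiplicativity gives $[K(\alpha):K] = \deg g \cdot \deg f = \deg(g \circ f)$. Since $\alpha$ is a root of $g \circ f$ and generates an extension of $K$ of the maximal possible degree, the minimal polynomial of $\alpha$ over $K$ is $g \circ f$ up to a scalar, so $g \circ f$ is irreducible.

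The only subtle point, and the step I would be most careful about, is the observation $\beta \in K(\alpha)$ used in the reverse direction: it is what collapses $K(\alpha,\beta)$ to $K(\alpha)$ and makes the degree count work out to $\deg(g \circ f)$ exactly rather than just a divisor thereof. The rest is routine.
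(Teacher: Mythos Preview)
Your proof is correct and is the standard tower-of-degrees argument for Capelli's Lemma. The paper itself does not prove this statement: it is quoted as a classical result and used as a black box, so there is no approach in the paper to compare against. One small point you might make explicit in the forward direction is that you chose a particular root $\beta = f(\alpha)$, whereas the statement fixes an arbitrary root $\beta$ of $g$; this is harmless since, once $g$ is known to be irreducible, any two roots are conjugate over $K$ and the irreducibility of $f-\beta$ over $K(\beta)$ is independent of the choice.
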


We now use Capelli's Lemma to produce a simple ancillary result which will help us in what follows.

\begin{lemma}\label{thm:irred_short}
	Let $g \in \vF_q[x]$ be monic and irreducible of even degree, and let $f = (x -
	a_f)^2 - b_f \in \vF_q[x]$.
	Then, $g \circ f$ is irreducible if and only if $g(-b_f)$ is not a square in
	$\vF_q$.
\end{lemma}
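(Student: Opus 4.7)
The plan is to apply Capelli's Lemma and reduce the irreducibility of $f - \beta$ over $L := \vF_q(\beta)$ to a square condition in $\vF_q$ via the norm map.

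First, by Capelli's Lemma, since $g$ is already irreducible by hypothesis, $g \circ f$ is irreducible if and only if $f - \beta$ is irreducible over $L$, where $\beta$ is a root of $g$. Writing $f - \beta = (x - a_f)^2 - (b_f + \beta)$, this degree-two polynomial is irreducible over $L$ if and only if $b_f + \beta$ is a non-square in $L$. So the target is to show that $b_f + \beta$ is a non-square in $L$ iff $g(-b_f)$ is a non-square in $\vF_q$.

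Next, I would invoke the standard square-detection criterion via norms for extensions of finite fields: for any $\alpha \in L^*$, $\alpha$ is a square in $L$ iff $N_{L/\vF_q}(\alpha)$ is a square in $\vF_q$. This follows from the identity $N_{L/\vF_q}(\alpha)^{(q-1)/2} = \alpha^{(q^n-1)/2}$, where $n = [L:\vF_q] = \deg g$, so both square tests reduce to the same exponent giving $1$. (Here $q$ is odd by the blanket assumption of the paper, so $(q-1)/2$ makes sense.)

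To close the loop I would use the factorization $g(x) = \prod_{\sigma} (x - \sigma(\beta))$ over the splitting field, where $\sigma$ runs over the embeddings of $L$ into $\overline{\vF_q}$. Evaluating at $-b_f$ gives
\[
g(-b_f) = \prod_\sigma (-b_f - \sigma(\beta)) = (-1)^{\deg g}\prod_\sigma (b_f + \sigma(\beta)) = (-1)^{\deg g}\, N_{L/\vF_q}(b_f + \beta).
\]
The even-degree hypothesis on $g$ exactly kills the sign, producing $g(-b_f) = N_{L/\vF_q}(b_f + \beta)$. Combined with the previous two steps, this yields the claimed equivalence. Note that $b_f + \beta \neq 0$: otherwise $-b_f \in \vF_q$ would be a root of the irreducible polynomial $g$ of degree $\geq 2$, which is impossible; equivalently $g(-b_f) \neq 0$.

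The only real subtlety is spotting where the even-degree hypothesis enters, namely to eliminate the factor $(-1)^{\deg g}$ in the last step; everything else is a direct invocation of Capelli's Lemma and the norm criterion for squares.
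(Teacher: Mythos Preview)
Your proof is correct and follows essentially the same route as the paper: apply Capelli's Lemma, reduce to whether $b_f+\beta$ is a square in $\vF_{q^d}$, compute its norm as $(-1)^{\deg g}g(-b_f)$, and use the even-degree hypothesis to drop the sign. Your version is slightly more explicit in justifying the norm-square criterion and in handling the edge case $b_f+\beta\neq 0$, but the argument is the same.
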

\begin{proof}
	Let $d = \deg(g)$, and let $\beta \in \vF_{q^d}$ be a root of $g$.
	According to Lemma~\ref{thm:capelli}, $g \circ f$ is irreducible over $\vF_q$
	if and only if $f - \beta$ is irreducible over $\vF_{q^d}$.
	Writing $f-\beta = (x - a_f)^2 - (b_f + \beta)$, this is equivalent to $b_f + \beta$
	not being a square in $\vF_{q^d}$.
	Let $N\colon \vF_{q^d} \to \vF_q$ be the norm map.
	If $\beta_1, \ldots, \beta_d$ are the roots of $g$, we have
	\begin{equation*}
		N(b_f + \beta) = \prod_{i=1}^{d}(b_f + \beta_i) = (-1)^d \prod_{i=1}^{d}
		((-b_f) - \beta_i) = g(-b_f),
	\end{equation*}
	since $d$ is even.
	Now we can conclude, 
	since $b_f + \beta$ is a nonsquare in $\vF_{q^d}$ if and
	only if $N(b_f + \beta) = g(-b_f)$ is a nonsquare in $\vF_q$.
\end{proof}

We are now ready to state one of the basic ingredients of the proof of the main theorem, which will allow us to ``push'' irreducibility questions for compositions of degree two polynomials on a finite level. 

\begin{proposition}\label{thm:irred_long}
	Let $f_1, \ldots, f_k \in \vF_q[x]$ be monic polynomials of degree two.
	Write $f_i = (x - a_i)^2 - b_i$ for all $i$.
	Then, $f_1 \circ \cdots \circ f_k$ is irreducible if and only if all of the
	following are nonsquares in $\vF_q$:
	\begin{itemize}
		\item $b_1$
		\item $f_1(-b_2)$
		\item[] $\vdots$
		\item $(f_1 \circ \cdots \circ f_{k-1})(-b_k)$.
	\end{itemize}
\end{proposition}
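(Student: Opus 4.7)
The plan is to prove this by induction on $k$, using Lemma~\ref{thm:irred_short} as the engine for the inductive step.

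For the base case $k=1$, the polynomial $f_1 = (x-a_1)^2 - b_1$ is irreducible over $\vF_q$ precisely when $b_1$ is a non-square, since translating by $a_1$ shows that irreducibility is equivalent to $b_1$ not having a square root in $\vF_q$. This matches the single condition in the statement.

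For the inductive step, I would assume the result holds for length $k-1$ (with $k \ge 2$) and set $g = f_1 \circ \cdots \circ f_{k-1}$, which is a monic polynomial of degree $2^{k-1}$; crucially, this degree is even because $k \ge 2$. Writing $f_1 \circ \cdots \circ f_k = g \circ f_k$, Capelli's Lemma (Lemma~\ref{thm:capelli}) tells us that $g \circ f_k$ is irreducible if and only if $g$ is irreducible and $f_k - \beta$ is irreducible over $\vF_q(\beta)$ for a root $\beta$ of $g$. Under the hypothesis that $g$ is irreducible, Lemma~\ref{thm:irred_short} applies (its parity assumption is precisely what we secured above) and converts the second condition into the concrete criterion that $g(-b_k) = (f_1 \circ \cdots \circ f_{k-1})(-b_k)$ is a non-square in $\vF_q$.

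To finish, I would combine this with the inductive hypothesis: $g$ is irreducible iff the first $k-1$ conditions $b_1, f_1(-b_2), \ldots, (f_1 \circ \cdots \circ f_{k-2})(-b_{k-1})$ are all non-squares, so adding the newly derived condition on $g(-b_k)$ gives exactly the stated list of $k$ non-square conditions. The only thing that requires a moment of care is the case where $g$ is already reducible: then $g \circ f_k$ is automatically reducible by Capelli, and simultaneously the inductive hypothesis guarantees that some entry among $b_1, \ldots, (f_1 \circ \cdots \circ f_{k-2})(-b_{k-1})$ is a square, so the right-hand side of the equivalence also fails. Hence both directions hold regardless of whether $g$ is irreducible.

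The main (mild) obstacle is bookkeeping the parity hypothesis needed for Lemma~\ref{thm:irred_short}: it forces the induction to start at $k=1$ as a separate base case, since for $k=1$ the "outer" polynomial would be $x$, whose degree is odd. Once the base case is handled directly, the parity requirement is automatic for every subsequent stage of the induction, and the rest of the argument is a straightforward assembly of Capelli's Lemma, Lemma~\ref{thm:irred_short}, and the inductive hypothesis.
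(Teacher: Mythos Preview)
Your argument is correct and is exactly the approach the paper takes: a base case $k=1$ handled directly, followed by induction using Lemma~\ref{thm:irred_short} (which already encapsulates Capelli's Lemma) on $g = f_1 \circ \cdots \circ f_{k-1}$. The paper's own proof is simply a terser version of what you wrote; your explicit treatment of the parity hypothesis and of the case where $g$ is reducible is careful but not a departure from the paper's method.
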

\begin{proof}
	Clearly, $f_1$ is irreducible if and only if $b_1$ is a nonsquare.
	The rest follows by inductive application of Lemma~\ref{thm:irred_short}.
\end{proof}

\subsection{Brief interlude on Automata Theory}

In this subsection we recall the basic results needed in the next section. 
For the definition of \emph{deterministic finite automaton} (DFA),
and \emph{nondeterministic finite automaton} (NFA) we refer for example to
\cite[Chapter~2]{bib:hopcroftintroduction}.
Since all the automata in the paper will have a finite set of states, we will often omit the word finite.

Let $\Sigma$ be a set of symbols (an \emph{alphabet}) and $\Sigma^*$ be the set
of words over $\Sigma$, that is the free monoid generated by it.
Let us recall that a subset $\cL$ of $\cS^*$ is called a \emph{language}. Let $\cdot$ be 
the usual binary operation in $\Sigma^*$ (i.e. concatenation of words) and ${}^*$ be the unary operation on languages defined by $\cL\mapsto \cL^*$ where $\cL^*$ is the smallest submonoid of $\Sigma^*$ containing $\cL$ (in the context of languages, this operation is often called \emph{Kleene star}).

 A language is said to be
\emph{regular} if it is finite or can be expressed recursively starting from
finite sets using the operations $\cup,\cdot,{}^*$ (see \cite{bib:hopcroftintroduction} for more details).  The following fact is
well known.

\begin{theorem}
A language is regular if and only if it is accepted by a DFA.
\end{theorem}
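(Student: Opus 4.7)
The plan is to prove both directions separately, and for technical convenience I would route one direction through NFAs using the equivalence between DFAs and NFAs (subset construction), since closure under the regular operations is much cleaner to verify for NFAs.

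For the ``regular implies DFA-accepted'' direction, I would argue by structural induction on the recursive definition of regular languages. The base case consists of finite languages, each of which is trivially accepted by a DFA: for a single word one uses a linear chain of states with a unique accepting state at the end, and a finite union of such automata is handled by the inductive step. For the inductive step, I would show that the class of NFA-acceptable languages is closed under $\cup$, $\cdot$, and Kleene star: union is obtained by taking a disjoint union of two NFAs together with a new start state having $\varepsilon$-transitions to the two old start states; concatenation is obtained by adding $\varepsilon$-transitions from the accepting states of the first NFA to the start state of the second; Kleene star is obtained by adding a new accepting start state and $\varepsilon$-transitions from the accepting states back to the start. Finally, invoking the subset construction converts the resulting NFA into an equivalent DFA.

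For the ``DFA-accepted implies regular'' direction, I would use the state elimination method (equivalently, the transitive closure algorithm of Kleene). Given a DFA with states $q_0, q_1, \ldots, q_n$, I define $R^{(k)}_{i,j}$ to be the regular expression describing the set of words that drive the automaton from $q_i$ to $q_j$ using only intermediate states with index at most $k$. The recursion
\begin{equation*}
R^{(k)}_{i,j} = R^{(k-1)}_{i,j} \cup R^{(k-1)}_{i,k} \cdot \bigl(R^{(k-1)}_{k,k}\bigr)^{*} \cdot R^{(k-1)}_{k,j}
\end{equation*}
produces a regular expression at each step, starting from the base case $R^{(-1)}_{i,j}$ which is either a single symbol, $\varepsilon$, or $\emptyset$, hence regular. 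The accepted language of the DFA is then the finite union $\bigcup_{q_j \text{ accepting}} R^{(n)}_{0,j}$, which is regular by construction.

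The main obstacle is bookkeeping rather than mathematical content. In the first direction, one must carefully justify the subset construction preserving the accepted language and handle $\varepsilon$-transitions correctly when constructing NFAs for concatenation and Kleene star; in the second direction, the inductive invariant that $R^{(k)}_{i,j}$ really captures exactly the paths from $q_i$ to $q_j$ through states indexed at most $k$ requires a careful argument decomposing any such path at its first and last visits to $q_k$. Since both constructions are standard and well documented in \cite{bib:hopcroftintroduction}, the proof would naturally be presented as a reference to the classical treatment rather than reproducing all the details.
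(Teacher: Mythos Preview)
Your proposal is correct and outlines the standard proof of Kleene's theorem. However, the paper does not actually prove this statement: it is quoted as a well-known fact with a reference to \cite{bib:hopcroftintroduction}, exactly as you anticipate in your final paragraph. So there is nothing to compare against beyond noting that your sketch matches the classical treatment the paper cites.
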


We will need the following fundamental results from the theory of Automata.

\begin{theorem}
If a language $\mathcal{L}$ is accepted by a DFA or an NFA, then it is regular. 
\end{theorem}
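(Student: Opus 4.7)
The plan is to prove the theorem in two steps: first reduce the NFA case to the DFA case, and then show that every language accepted by a DFA is regular by directly building a regular expression that denotes it.

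First, I would invoke (or briefly recall) the subset construction of Rabin and Scott: given an NFA with state set $Q$, one defines a DFA whose states are subsets of $Q$, with transitions induced by the set of all reachable states. A routine induction on word length shows that this DFA accepts exactly the same language as the original NFA. Hence it suffices to treat the DFA case.

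Second, I would apply the McNaughton--Yamada / Kleene construction. Let the DFA have states $q_1, \ldots, q_n$, with $q_1$ the initial state and $F \subseteq \{q_1,\ldots,q_n\}$ the set of accepting states. For $0 \le k \le n$ and $1 \le i,j \le n$, define $R^k_{ij} \subseteq \Sigma^*$ to be the set of words that drive the automaton from $q_i$ to $q_j$ using only intermediate states (i.e.\@ states strictly between the endpoints of the computation) in $\{q_1,\ldots,q_k\}$. I would then prove the recursion
\begin{equation*}
R^k_{ij} = R^{k-1}_{ij} \cup R^{k-1}_{ik} \cdot \bigl(R^{k-1}_{kk}\bigr)^* \cdot R^{k-1}_{kj} \qquad (1 \le k \le n),
\end{equation*}
with base case $R^0_{ij}$ equal to the set of symbols $\sigma \in \Sigma$ whose transition from $q_i$ lands at $q_j$, together with the empty word if $i=j$. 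Since $R^0_{ij}$ is finite and the class of regular languages is closed under $\cup$, $\cdot$, and ${}^{*}$ by definition, induction on $k$ yields that every $R^k_{ij}$ is regular. The language recognised by the DFA is then $\bigcup_{q_f \in F} R^n_{1f}$, a finite union of regular languages and therefore regular.

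The main obstacle is verifying the recursion above. The argument is that any computation from $q_i$ to $q_j$ through intermediate states in $\{q_1,\ldots,q_k\}$ either avoids $q_k$ entirely, in which case the corresponding word lies in $R^{k-1}_{ij}$, or visits $q_k$ at least once. In the latter case, splitting the computation at the first and last occurrence of $q_k$ decomposes the word as a prefix in $R^{k-1}_{ik}$, a (possibly empty) concatenation of loops each lying in $R^{k-1}_{kk}$, and a suffix in $R^{k-1}_{kj}$; the reverse inclusion is immediate by concatenating paths of the respective types. Once this decomposition is written out carefully, the theorem follows.
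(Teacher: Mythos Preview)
Your proposal is correct and follows the classical textbook route (subset construction to reduce NFA to DFA, then the McNaughton--Yamada/Kleene recursion $R^k_{ij} = R^{k-1}_{ij} \cup R^{k-1}_{ik}\cdot(R^{k-1}_{kk})^*\cdot R^{k-1}_{kj}$ to extract a regular expression). There is no gap in the argument as sketched.

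However, there is nothing to compare against: the paper does not prove this theorem at all. It is stated in the ``Brief interlude on Automata Theory'' as a fundamental background result, with an implicit reference to \cite{bib:hopcroftintroduction}, and is used as a black box thereafter. So your proof is not ``the same as'' or ``different from'' the paper's --- it simply supplies what the paper deliberately omits. If you are writing this up as an exercise, your approach is exactly the standard one found in Hopcroft--Ullman (the reference the paper cites), so in that sense it is fully in line with what the authors have in mind.
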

Roughly, the theorem above states that the accepted languages of NFAs
do not generalize the notion of regular language.

We will also be using the notion of a \emph{partial deterministic finite
automaton}, which is the same as a DFA, except the transition function is
actually a partial function.
If, when reading a word, a required transition is not defined, the word is
rejected.
Clearly, a partial DFA is a special case of an NFA, so languages accepted by
partial DFAs are also regular.

\subsection{Putting all together: building the automaton}
We first define a finite deterministic automaton $\Ansq=\Ansq(\cS)$ using the data contained in $\cS$. 

\begin{definition}\label{def:automaton_nonsquare}
The states of the automaton $\Ansq(\cS)$ are given by the following:
	\begin{itemize}
		\item A special start state $\init$.
			It is accepting.
		\item For each $a \in \vF_q$, we have a distinguished state
			$\dist{a}$.
			It is accepting if $-a$ is a nonsquare.
		\item For each $a \in \vF_q$, we have a state $\reg{a}$.
			It is accepting if $a$ is a nonsquare.
	\end{itemize}
	The transitions are as follows:
	\begin{itemize}
		\item For each $f \in \cS$, we have a transition
			$\init \xrightarrow{f} \dist{-b_f}$.
		\item For each $f \in \cS$ and each $a \in \vF_q$, we have a transition
			$\dist{a} \xrightarrow{f} \reg{f(a)}$.
		\item For each $f \in \cS$ and each $a \in \vF_q$, we have a transition
			$\reg{a} \xrightarrow{f} \reg{f(a)}$.
	\end{itemize}
\end{definition}

\begin{remark}
	The reason we distinguish between the states $\reg{a}$ and $\dist{a}$ is that
	they may be accepting at different times:
	$\reg{a}$ accepts if $a$ is nonsquare, $\dist{a}$ if $-a$ is nonsquare.
	In the case that $-1$ is a square in $\vF_q$, the two are equivalent and we
	can identify the two types of states.
\end{remark}

\begin{theorem}\label{thm:irred_regular}
	The language $\Lirr$ of irreducible compositions is regular.
\end{theorem}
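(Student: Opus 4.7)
The plan is to establish that the DFA $\Ansq(\cS)$ from Definition~\ref{def:automaton_nonsquare} accepts exactly the language $\Lirr$, from which the theorem follows since every language accepted by a finite DFA is regular. I would argue by induction on the length $k$ of a word $w = f_1 \cdots f_k \in \cS^*$. The base cases are immediate: for $k = 0$, the empty word keeps the automaton at the accepting state $\init$, matching the (trivial) irreducibility of $\pi(\varepsilon) = x$; for $k = 1$ with $w = f$, the automaton reaches $\dist{-b_f}$, which is accepting exactly when $b_f$ is a nonsquare, precisely the irreducibility criterion for the quadratic $f = (x - a_f)^2 - b_f$.

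For the inductive step, I would first show, from a direct unfolding of the transitions, that after reading $f_1 \cdots f_i$ with $i \geq 2$ the automaton sits in state $\reg{\alpha_i}$ where the labels are defined by $\alpha_1 = -b_{f_1}$ and $\alpha_j = f_j(\alpha_{j-1})$ for $j \geq 2$. Then I would invoke Proposition~\ref{thm:irred_long}, which translates irreducibility of $\pi(w)$ into the conjunction of nonsquareness of the $k$ field elements $b_{f_1}$, $f_1(-b_{f_2})$, \ldots, $(f_1 \circ \cdots \circ f_{k-1})(-b_{f_k})$. The heart of the argument is to align the iterative structure of these Proposition~\ref{thm:irred_long} conditions, which come from iterating Lemma~\ref{thm:irred_short} (Capelli's Lemma together with a norm computation), with the iterative transitions $\dist{a} \xrightarrow{f} \reg{f(a)}$ and $\reg{a} \xrightarrow{f} \reg{f(a)}$ of the DFA.

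The main obstacle will be verifying that the single nonsquareness check at the final state $\reg{\alpha_k}$ faithfully records the whole conjunction of conditions from Proposition~\ref{thm:irred_long}, not only the last one. I expect the key to lie in the deliberate asymmetry between the acceptance predicates of $\dist{a}$ (which checks $-a$) and $\reg{a}$ (which checks $a$): this asymmetry is what encodes the initial boundary condition on $b_{f_1}$ in the very first transition and then carries the correct sign through the subsequent $\reg{\cdot}$ iterations, so that the equivalence between acceptance at the final state and the full Proposition~\ref{thm:irred_long} condition follows by a careful induction tying each transition to the corresponding application of Lemma~\ref{thm:irred_short}.
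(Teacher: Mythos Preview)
Your plan has a genuine gap: the DFA $\Ansq$ read from left to right does \emph{not} accept $\Lirr$. Trace the transitions you yourself wrote down: after reading $f_1\cdots f_k$ (with $k\ge 2$) the automaton is in state $\reg{\alpha_k}$ with $\alpha_k=(f_k\circ f_{k-1}\circ\cdots\circ f_2)(-b_{f_1})$. But the conditions in Proposition~\ref{thm:irred_long} involve the opposite composition order, namely $(f_1\circ\cdots\circ f_{j-1})(-b_{f_j})$. So not only does the final state fail to record the whole conjunction, it does not record \emph{any} of the required conditions. A concrete counterexample in the setting of Example~\ref{ex:1}: reading the word $fg$ left to right lands in $\reg{g(-b_f)}=\reg{1}$, a rejecting state, although $f\circ g$ is irreducible; and reading $gf$ lands in $\reg{f(-b_g)}=\reg{2}$, an accepting state, although $g\circ f$ is reducible. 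The $\dist{\cdot}$/$\reg{\cdot}$ sign asymmetry you point to only fixes the sign at the very first step; it cannot make a single final-state bit carry $k$ independent nonsquareness conditions, nor can it reverse the order of composition.

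The paper's route is different in two essential ways. First, $\Ansq$ is read from \emph{right to left}; then the terminal state after $f_1\cdots f_k$ is $\reg{(f_1\circ\cdots\circ f_{k-1})(-b_{f_k})}$, so the accepted language $\Lnsq$ records exactly the \emph{last} condition of Proposition~\ref{thm:irred_long}. Second, since Proposition~\ref{thm:irred_long} says $f_1\cdots f_k\in\Lirr$ if and only if every prefix lies in $\Lnsq$, one still has to pass from $\Lnsq$ to the language of words all of whose prefixes are in $\Lnsq$. The paper does this by reversing the arrows of $\Ansq$ to obtain an NFA for $\Lnsq$ read left to right, applying the subset construction to determinise, and then deleting the non-accepting states (so any prefix hitting a rejecting state is killed). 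This last step is precisely what encodes the conjunction you were hoping the raw automaton would encode on its own.
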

\begin{proof}
	Let $\Lnsq$ be the regular language over the alphabet $\cS$ that is accepted
	by the automaton $\Ansq$ reading from \emph{right to left}.
	It is easy to see that a single letter $f$ is in $\Lnsq$ if and only if $b_f$
	is nonsquare.
	Furthermore, a word $f_1 \cdots f_k$, $k \ge 2$, is in $\Lnsq$ if and only if
	$(f_1 \circ \ldots \circ f_{k-1})(-b_{f_k})$ is nonsquare.
	By Proposition~\ref{thm:irred_long}, it follows that the word $f_1 \cdots
	f_k$ corresponds to an irreducible polynomial if and only if each prefix $f_1
	\cdots f_l$, $l \le k$, lies in $\Lnsq$.
	In other words, $\Lirr$ is the language of all words whose every prefix is in
	$\Lnsq$.

	We want to prove that $\Lirr$ is regular. To do so, let us first construct a
	non-deterministic automaton $\mathcal U$ from $\Ansq$ by reversing all the
	arrows of $\Ansq$ and setting the start state of $\Ansq$ as final state of
	$\mathcal U$ and the final states of $\Ansq$ as start states of $\mathcal U$.
	Observe that the accepted language of $\mathcal U$ is again the language
	$\Lnsq$, this time reading from left to right as usual. Now we use subset construction (see
	e.g.~\cite[Section~2.3.5]{bib:hopcroftintroduction}) to generate a new
	deterministic automaton $\mathcal V$ having the same accepted language as
	$\mathcal U$.
	Finally, we remove all non-accepting states from $\mathcal V$ to obtain the
	partial automaton $\Airr$.
	It is easily seen that $\Airr$ accepts exactly those words whose every prefix
	is accepted by $\mathcal V$, which as explained above is $\Lirr$.
\end{proof}

\begin{remark}\label{rem:prefixclosed}
Notice that the language accepted by the final automaton $\mathcal M$ is prefix closed.
\end{remark}

We now provide an example to see the theorem in action.

\begin{example}\label{ex:1}
	As a simple example, consider the case $q = 5$ and $\cS = \{f, g\}$ with
	$f = x^2 - 2$ and $g = (x-1)^2 - 3$.

	We first construct the interim automaton $\Ansq$ using the method described in
	Definition~\ref{def:automaton_nonsquare}.
	Since $p \equiv 1 \mod 4$, we can identify the nodes $\dist{a}$ and $\reg{a}$.
	Note that we have removed the node $\reg{0}$ since it is not reachable from
	$\init$.
	The result is seen in Figure~\ref{fig:ex1_Ansq}.

	\begin{figure}[h]
		\centering
		\begin{tikzpicture}[auto]
			\node (I) at (0,0) [draw] {$\init$};

			\node (a2) at ( 1,-1) [draw] {$\reg{2}$};
			\node (a3) at (-1,-1) [draw] {$\reg{3}$};
			\node (a1) at (-1,-2.5) [] {$\reg{1}$};
			\node (a4) at ( 1,-2.5) [] {$\reg{4}$};

			\draw [->] (I) to node [swap] {$f$} (a3);
			\draw [->] (I) to node [] {$g$} (a2);

			\draw [->] ([yshift=-2]a3.east) to node [swap] {$f$} ([yshift=-2]a2.west);
			\draw [->] (a3) to node [swap] {$g$} (a1);

			\draw [->] (a2) to [loop right] node {$f$} (a2);
			\draw [->] ([yshift=2]a2.west) to node [swap] {$g$} ([yshift=2]a3.east);

			\draw [->] ([yshift=-2]a1.east) to node [swap] {$f$} ([yshift=-2]a4.west);
			\draw [->] (a1) to node [swap,xshift=-1,yshift=4] {$g$} (a2);

			\draw [->] (a4) to [loop right] node {$f$} (a4);
			\draw [->] ([yshift=2]a4.west) to node [swap] {$g$} ([yshift=2]a1.east);
		\end{tikzpicture}
		\caption{The interim automaton $\Ansq$ for Example~\ref{ex:1}, coming from
		Definition~\ref{def:automaton_nonsquare}. Boxed states are accepting.}
		\label{fig:ex1_Ansq}
	\end{figure}

	After performing the transformation described in the proof of
	Theorem~\ref{thm:irred_regular} and cutting out all unreachable states, we end
	up with the simple partial automaton $\Airr$ in Figure~\ref{fig:ex1_Airr}.
	This shows that the irreducible compositions of $f$ and $g$ are precisely
	those of the form $f^{(n)}$, $f^{(n)} \circ g$, $f^{(n)} \circ g^{(2)}$ and $f^{(n)} \circ g^{(2)} \circ f$ for $n \ge 0$.

	\begin{figure}[h]
		\centering
		\begin{tikzpicture}[auto]
			\node (S) at (0,0) [draw,circle] {$S$};

			\node (1) at ( 1.5, 0) [draw,circle] {$1$};
			\node (2) at ( 3.0, 0) [draw,circle] {$2$};
			\node (3) at ( 4.5, 0) [draw,circle] {$3$};

			\draw [->] (S) to [loop below] node {$f$} (S);
			\draw [->] (S) to node {$g$} (1);
			\draw [->] (1) to node {$g$} (2);
			\draw [->] (2) to node {$f$} (3);

		\end{tikzpicture}
		\caption{The automaton $\Airr$ accepting $\Lirr$ for Example~\ref{ex:1}. All states are accepting.}
		\label{fig:ex1_Airr}
	\end{figure}
\end{example}

Using the machinery we developed in the rest of the paper, we describe an
infinite set of primes of $\vF_q[x]$ having a finite regular structure.

\begin{theorem}\label{thm:disjoint}
Let $\vF_q$ be a finite field of characteristic different from $2$.
The set of irreducible polynomials having coefficients in $\vF_q$ which can be
written as a nonempty composition of degree $2$ polynomials can be partitioned
into a finite disjoint union $\bigsqcup_{a\in \vF_q} \cL_a$ in such a way that
each $\cL_a$ is in natural bijection with the words of a regular language $\cL$,
which is independent of $a$. In particular, the set of such irreducible
polynomials has a finite regular expression in terms of the elementary
operations $\cup, \cdot, {}^*$.
\end{theorem}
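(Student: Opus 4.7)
The plan is to reduce everything to the ``centred'' sub-alphabet $\cS_0 := \{x^2 - b : b \in \vF_q\}$, for which Theorem~\ref{thm:irred_regular} already produces a regular language of irreducible compositions. Since $\lvert D_{\cS_0}\rvert = \lvert \cS_0\rvert = q$, Proposition~\ref{thm:freedom_max_min} makes $\pi \colon \cS_0^* \to C_{\cS_0}$ a bijection. Applying Theorem~\ref{thm:irred_regular} with alphabet $\cS_0$ gives a regular language of words encoding irreducible compositions; removing the empty word (which encodes $x$) yields a regular language $\cL$ whose words biject with the nonidentity irreducible elements of $C_{\cS_0}$. This $\cL$ will serve as the universal regular language.

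Next I would put every composition of monic degree-two polynomials into a canonical form. Writing $f = (x^2 - b_f) \circ T_{-a_f}$ with $T_c(x) = x + c$, and using the identity $T_c \circ (x^2 - b) = x^2 - (b - c) \in \cS_0$, an induction on $k$ shows that any $F = f_1 \circ \cdots \circ f_k$ can be rewritten as $F = H \circ T_{-a}$ for some $H \in C_{\cS_0}$ and some $a \in \vF_q$. A short expansion shows that every nonidentity $H \in C_{\cS_0}$ has zero coefficient in $x^{\deg H - 1}$, so the coefficient of $x^{\deg F - 1}$ in $F$ equals $-\deg(F) \cdot a$. Since $q$ is odd, $\deg F = 2^k$ is a unit in $\vF_q$, and hence $a$ is intrinsic to $F$ and independent of the particular factorisation chosen.

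Now define $\cL_a$ to be the set of irreducible polynomials over $\vF_q$ that are nonempty compositions of monic degree-two polynomials with intrinsic parameter $a$; the $q$ sets $\cL_a$ partition the target set by construction. For each $a$, the map $\varphi_a \colon \cL \to \cL_a$ defined by $w \mapsto \pi(w)(x - a)$ is a bijection: the image lies in $\cL_a$ because the innermost factor of $\pi(w)$ composed with $T_{-a}$ is again a monic degree-two polynomial and irreducibility is preserved by the affine substitution $x \mapsto x - a$, while the inverse sends $F \in \cL_a$ to the unique word $w \in \cL$ with $\pi(w) = F(x + a)$, which exists and is unique by the freedom of $C_{\cS_0}$. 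A finite regular expression for the target set is then obtained as the union over $a \in \vF_q$ of the $\varphi_a$-images of a regular expression for $\cL$.

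The principal obstacle I anticipate is the intrinsicness of $a$ in step two. Factorisations over the full alphabet of monic degree-two polynomials need not be unique, so the innermost translation parameter of an arbitrary decomposition could a priori shift; the rescue is that the sub-leading coefficient of $F$ pins $a$ down as a polynomial invariant, so the partition and all subsequent bijections are well-defined.
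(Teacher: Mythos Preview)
Your proof is correct and follows the same overall architecture as the paper: take $\cS_0=\{x^2-b:b\in\vF_q\}$, invoke Proposition~\ref{thm:freedom_max_min} for freedom, apply Theorem~\ref{thm:irred_regular} to get the regular language $\cL$, and then transport via $x\mapsto x-a$ to obtain the sets $\cL_a$. The one substantive difference is in how disjointness of the $\cL_a$ is established. The paper argues by contradiction on a minimal-length collision: if $v(x+a)=w(x+b)$ with $a\neq b$, peeling off the outermost squares and factoring a difference of squares forces the suffixes to satisfy $v'(x+a)=w'(x+b)$, contradicting minimality. You instead observe that every nonidentity $H\in C_{\cS_0}$ has vanishing sub-leading coefficient, so the sub-leading coefficient of $F=H(x-a)$ equals $-\deg(F)\cdot a$; since $\deg F=2^k$ is a unit in odd characteristic, $a$ is a polynomial invariant of $F$ and the $\cL_a$ are automatically disjoint. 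Your argument is shorter and makes the partition parameter explicitly readable from $F$, while the paper's argument avoids introducing the sub-leading coefficient and stays purely at the level of compositions; either route suffices.
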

\begin{proof}
Let $D$ be the set of irreducible polynomials in $\vF_q[x]$ that can be written
as nonempty composition of degree $2$ polynomials. Let $\cS=\{x^2-b\}_{b\in \vF_q}$. By
Proposition~\ref{thm:freedom_max_min}, $C_\cS$ is isomorphic to $\cS^*$, so it is naturally
embedded in $\vF_q[x]$. Apply now Theorem~\ref{thm:irred_regular} to obtain  the
regular language of irreducible polynomials $\cI$ generated by $\cS$, and let
$\cL = \cI \setminus \{x\}$.
Let $\psi_a\colon \cL\longrightarrow \vF_q[x]$ be the shift map defined by
$f(x)\mapsto f(x+a)$.
Let $\cL_a=\psi_a(\cL)$.
It is easy to observe that for any polynomial $f\in D$, there exists $a\in
\vF_q$ such that $f(x-a)$ can be written as an element of $C_\cS$.
This shows that
\[D=\bigcup_{a\in \vF_q} \cL_a.\]
It remains to show that $\cL_a\cap \cL_b=\emptyset$ if $a\neq b$, the final
result will follow immediately. We argue by induction on the length of the words
in $\cL$ (i.e.\@ the degree of the polynomials). Let $a,b\in \vF_q$ with $a\neq b$
such that there exist two words $v,w\in \cL$ of minimal length $\ell$ such that
$\psi_a(v)=\psi_b(w)$. If $\ell=1$, this is clearly impossible, so let us assume
$\ell>1$. We can write $f(v'(x+a))=g(w'(x+b))$ for some $f,g\in \cS$ and $v',w'$
suffixes of $v$ and $w$ respectively. Therefore, for some $k,j\in \vF_q$ we have
\[v'(x+a)^2-w'(x+b)^2=(v'(x+a)-w'(x+b))(v'(x+a)+w'(x+b))=k-j.\]
Since $v',w'$ are monic and the characteristic of $\vF_q$ is different from $2$,
then the degree of the polynomial $(v'(x+a)+w'(x+b))$ is greater than or equal
to $2$. This forces both $k=j$ and $(v'(x+a)-w'(x+b))=0$, which contradicts the
minimality of $\ell$.
\end{proof}

\begin{example}\label{ex:2}
	For an example demonstrating Theorem~\ref{thm:disjoint}, take $q = 3$ and
	$\cS = \{f, g, h\}$ with $f = x^2$, $g = x^2 - 1$, $h = x^2 - 2$.
	From Proposition~\ref{thm:freedom_max_min}, $C_\cS$ is free and isomorphic to
	$\cS^*$.
	Applying the construction, we get the automaton shown in
	Figure~\ref{fig:ex2_Airr}.
	We see that the irreducible polynomials in $C_\cS$ are exactly $x$, $h$,
	$h\circ g\circ f^{(n)}$ for $n \ge 0$, and $h^{(2)}\circ k$ for $k \in C_\cS$ arbitrary (possibly the identity).
	Applying Theorem~\ref{thm:disjoint}, it follows that the set of irreducible polynomials in $\vF_3[x]$ that can be written as a nonempty composition of
	degree $2$ polynomials is precisely 
	\[\bigcup_{a\in \vF_3} \left(\{h(x+a)\} \cup \{h\circ g \circ f^{(n)}(x+a) : n \ge 0\} \cup \{h^{(2)}\circ k(x+a) : k \in C_\cS\}\right).\]

	\begin{figure}[h]
		\centering
		\begin{tikzpicture}[auto]
			\node (S) at (0,0) [draw,circle] {$S$};

			\node (1) at ( 1.5, 0) [draw,circle] {$1$};
			\node (2) at ( 3.0, 0) [draw,circle] {$2$};
			\node (3) at ( 1.5, -1.5) [draw,circle] {$3$};

			\draw [->] (S) to node {$h$} (1);
			\draw [->] (2) to [loop right] node {$f$} (2);
			\draw [->] (1) to node {$g$} (2);
			\draw [->] (1) to node {$h$} (3);
			\draw [->] (3) to [loop left] node {$f$} (3);
			\draw [->] (3) to [loop below] node {$g$} (3);
			\draw [->] (3) to [loop right] node {$h$} (3);

		\end{tikzpicture}
		\caption{The automaton $\Airr$ accepting $\Lirr$ for Example~\ref{ex:2}. All states are accepting.}
		\label{fig:ex2_Airr}
	\end{figure}
\end{example}

Let us now describe two implications of our result in the case in which $q$ is small compared with $n$. Recall that the \emph{iterated logarithm} of a positive real number $x$, denoted by $\log^*x$, is the number of times the logarithm function must be iteratively applied before the result is less than or equal to $1$.
\begin{corollary}
For fixed $q$, we can list all monic irreducible polynomials of degree $2^n$ which are compositions of degree $2$ polynomials with complexity $O(q^n  2^n  n  8^{\log^*(2^n)})$, where the implied constant depends only on $q$.
\end{corollary}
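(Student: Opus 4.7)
The plan is to reduce to enumerating words of length $n$ in the language $\cL$ of Theorem~\ref{thm:disjoint}, built from the alphabet $\cS=\{x^2-b\}_{b\in\vF_q}$, and then, for each such word $w$, to output the $q$ translates $\pi(w)(x+a)$, $a\in\vF_q$. By Theorem~\ref{thm:disjoint}, this produces every monic irreducible of degree $2^n$ that decomposes into degree two polynomials exactly once, so no duplicate elimination is needed.

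I would enumerate the length-$n$ words of $\cL$ by a depth-first search on the tree whose nodes at depth $k$ are pairs $(w,P_k)$ with $w\in\cS^k$ of irreducible composition $P_k=\pi(w)$ of degree $2^k$. Proposition~\ref{thm:irred_long} provides the incremental irreducibility test: $wf$ is admissible if and only if $P_k(-b_f)$ is a nonsquare in $\vF_q$. To expand $(w,P_k)$, I evaluate $P_k$ at the $q$ points $-b$, $b\in\vF_q$ (by Horner, $O(q\cdot 2^k)$ operations), and for each admissible child I compute $P_{k+1}(x)=P_k(x^2-b)$ as the Taylor shift $P_k(y-b)$ followed by the cost-free substitution $y=x^2$ (which merely reindexes coefficients $\sum c_iy^i\mapsto\sum c_ix^{2i}$). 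The DFS requires only $O(n\cdot 2^n)$ working memory to hold the stack of polynomials.

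For the running time, the tree has at most $q^k$ nodes at depth $k$, hence at most $O(q^n)$ nodes in total. Invoking a fast polynomial multiplication algorithm over $\vF_q$ of cost $M(d)=O(d\log d\cdot 8^{\log^* d})$, the Taylor shift at a node of depth $k$ costs $O(2^k\cdot k\cdot 8^{\log^*(2^n)})$. Summing $q^k\cdot(q\cdot 2^k+M(2^k))$ over $k=0,\ldots,n$ yields a geometric-like series dominated by its last term, giving $O(q^n\cdot 2^n\cdot n\cdot 8^{\log^*(2^n)})$. The final application of the $q$ shifts $x\mapsto x+a$ to the at most $q^n$ surviving leaves costs $O(q^{n+1}\cdot M(2^n))$, which is absorbed into the same bound since $q$ is fixed.

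The only delicate point is constant-chasing: one must verify that the composition step $P_k\mapsto P_k(x^2-b)$ reduces to a single Taylor shift (with the inflation $y\mapsto x^2$ essentially free), and invoke a polynomial multiplication algorithm whose complexity has the precise $8^{\log^*}$ shape stated in the corollary. Granted these, the tree-search analysis gives the bound directly; the rest is routine.
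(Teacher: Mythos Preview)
Your argument is correct and arrives at the stated bound, but it proceeds somewhat differently from the paper. The paper first builds the automaton $\Airr$ of Theorem~\ref{thm:irred_regular} (cost $O(1)$ for fixed $q$) and uses it to enumerate the length-$n$ words of $\cL$ by extending words one letter at a time, each extension being a constant-time state lookup; only after the set of accepted words is known does it compute each polynomial from scratch by successive squarings, at cost $O(2^n n\,8^{\log^*(2^n)})$ per leaf. You instead bypass the automaton entirely: you carry the actual polynomial $P_k$ along the DFS and use Proposition~\ref{thm:irred_long} directly, testing irreducibility of the extension $wf$ by evaluating $P_k(-b_f)$ and checking for a nonsquare, then updating $P_{k+1}=P_k(x^2-b_f)$ via a Taylor shift.

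Both routes yield the same asymptotic bound. Your approach is slightly more elementary in that it never invokes the automaton or the subset construction, relying only on Proposition~\ref{thm:irred_long}; it also amortises the polynomial computation across the tree rather than redoing it independently at each leaf. The paper's approach, on the other hand, cleanly separates the combinatorial enumeration (pure automaton work, $O(q^n)$ total) from the algebraic evaluation, which makes the role of the automaton and of Remark~\ref{rem:prefixclosed} more transparent. One small bookkeeping remark: in your sum $\sum_k q^k(q\cdot 2^k+M(2^k))$, each internal node may spawn up to $q$ Taylor shifts, not one, so the honest per-node cost is $q\cdot 2^k+q\cdot M(2^k)$; this extra factor of $q$ is harmless since $q$ is fixed, and your geometric-series conclusion is unaffected.
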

\begin{proof}
First, we choose $\cS=\{x^2-a: \;a\in \mathbb F_q\}$ and write down the automaton $\mathcal M$ given by Theorem \ref{thm:irred_regular}. The complexity of this step is $O(1)$, where the implied constant clearly depends only on $q$.
Since the distinguished set is maximal, we can identify the polynomials $C_\cS$ and the words in $\cS^*$.
Now the construction is recursive: let $\cL_0(m)$ be the set of words of $\cS^*$ of length $m$ (so polynomials of degree $2^m$). For any word $g\in \cL_0(m)$ and any $f\in \cS$, check if the word $gf$ is accepted by the automaton $\mathcal M$: if it is, $gf$ is an element in $\cL_0(m+1)$. This check takes constant time for fixed $q$ if for each element of $\cL_0(m)$ we also store the state in which the automaton ends after reading it.
As we observed in Remark~\ref{rem:prefixclosed}, the language accepted by $\mathcal M$ is prefix closed, so  all the elements of $\cL_0(m+1)$ can be constructed in this way. Then, constructing $\cL_0(m+1)$ from $\cL_0(m)$ costs at most $O(q^{m+1})$.

It is elementary now to observe that $\cL_0(1)$ can be easily constructed by selecting the irreducible degree $2$ polynomials (again $O(1)$ operations) and therefore that $\cL_0(n)$ can be constructed in time at most $O(q^n)$.
Using the proof of Theorem~\ref{thm:disjoint} directly, we know that the set $D(n)$ of all irreducible polynomials of degree $2^n$ which are compositions of degree $2$ polynomials can be written as
\[D(n)=\bigsqcup_{a\in \vF_q} \cL_a(n),\]
where $\cL_a(n)=\{g(x+a): \; g\in \cL_0(n)\}$.

In order to represent the elements of $D(n)$ by coefficients, we now need to evaluate the elements of $\cL_0(n)$.
For a single such element, if we evaluate it from inside out, this means squaring polynomials of degree $d = 1, 2, \ldots, 2^{n-1}$ and subtracting a constant each time.
According to~\citep{bib:harvey2017faster}, such a squaring can be done in time $O(d \log(d) 8^{\log^*(d)})$, the total time for each element hence being $O(2^n n 8^{\log^*(2^n)})$.
\end{proof}

\begin{remark}
The reader should notice that this is much quicker than listing such polynomials by using an irreducibility test. This would have in fact complexity $O(q^n  2^n  n  8^{\log^*(2^n)}  I(2^n))$, where $I(2^n)$ is the cost of the chosen irreducibility test for a polynomial of degree $2^n$.  Again, the implied constant depends on the time of constructing the automaton, which just depends on the parameter $q$.
\end{remark}

Another interesting fact is that (again for fixed $q$) we have an efficient deterministic algorithm to test irreducibility for polynomials which are a composition of degree two polynomials.

\begin{corollary}\label{cor:irred_test_decomp}
	Let $f$ be a polynomial of degree $2^n$ which is known to be a composition of
	monic degree two polynomials.
	Then $f$ can be tested for irreducibility in time $O(2^nn^2 8^{\log^*(2^n)})$,  where the implied
	constant depends only on $q$.
\end{corollary}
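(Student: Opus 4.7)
The plan is to decompose $f$ into an explicit word in $\cS^*$, where $\cS$ is the alphabet of all monic quadratics over $\vF_q$, and then feed that word through the automaton $\Airr(\cS)$ built in Theorem~\ref{thm:irred_regular}. Since $\abs{\cS} = q^2$, $\Airr$ and its transition table can be precomputed in time depending only on $q$, so the test reduces to (i) recovering the $n$ monic quadratic factors of $f$ and (ii) traversing $\Airr$ on the resulting word of length $n$.

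For (i), I would peel off the outermost factor by a standard polynomial square root. If $F$ is a monic polynomial of degree $2^k \geq 4$ and $F = ((x - a)^2 - b) \circ g$ with $g$ monic of degree $2^{k-1}$, then $F + b = (g-a)^2$, so the unique monic $H \in \vF_q[x]$ of degree $2^{k-1}$ whose square agrees with $F$ in every coefficient of degree $\geq 2^{k-1}$ must equal $g-a$. Computing $H$ via Newton iteration on the reversed polynomial takes $O(M(2^k))$ field operations, where $M(d) = O(d \log d \cdot 8^{\log^* d})$ is the multiplication bound from \cite{bib:harvey2017faster}. The hypothesis on $f$ forces the remainder $F - H^2$ to be the constant $-b$; extracting $b$ yields the outermost factor $x^2 - b$, and $H$ itself is again a composition of $n-1$ monic quadratics (the shift $-a$ just gets absorbed into the constant term of the outermost factor of $g$'s own decomposition), so the recursion can continue. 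The base case $k=1$ is trivial since $F \in \cS$.

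The complexity is dominated by the top square-root: summing geometrically over the $n$ levels gives $O(M(2^n)) = O(2^n n \cdot 8^{\log^*(2^n)})$ for the decomposition, and the $\Airr$ traversal adds only $O(n)$, comfortably within the stated bound $O(2^n n^2 \cdot 8^{\log^*(2^n)})$.

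The main obstacle is the correctness of the peel-off: one must know that at every recursive step the Newton square root has a constant remainder, and that the inner polynomial $H$ inherits the structural hypothesis. Both points follow from the uniqueness observation sketched above, since any writing $F = (g - a)^2 - b$ forces $H = g - a$ by matching coefficients of degree $\geq 2^{k-1}$, whence $F - H^2 = -b$ is a constant, and $H$ remains a composition of monic quadratics after shift absorption. Once the decomposition is in hand, Theorem~\ref{thm:irred_regular} immediately delivers the irreducibility verdict from the final state reached by $\Airr$.
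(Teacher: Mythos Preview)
Your argument is correct and follows the same two-stage strategy as the paper: recover a decomposition of $f$ into monic quadratics, then run the resulting length-$n$ word through the automaton of Theorem~\ref{thm:irred_regular}. The difference lies in the decomposition step. The paper invokes the general \texttt{Univariate decomposition} algorithm of von~zur~Gathen at each level, quoting a cost of $O(d\log(d)^2\,8^{\log^*(d)})$ per peel and hence $O(2^n n^2\,8^{\log^*(2^n)})$ overall; it also carries a trailing linear shift $\ell = x-b$, so that $f = g_1\circ\cdots\circ g_n\circ\ell$ with each $g_i = x^2-a_i$, and tests $g_1\circ\cdots\circ g_n$ (equivalently $f(x+b)$) for irreducibility. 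Your approach instead computes the monic square root of the reversed polynomial by Newton iteration, which is more elementary, costs only $O(M(2^k))$ per level, and sums geometrically to $O(2^n n\,8^{\log^*(2^n)})$ --- a $\log$ factor better than the bound claimed in the corollary. You also avoid the dangling linear factor by absorbing the shift into the innermost quadratic and working over the full alphabet of monic quadratics; this is equivalent to the paper's normal form but slightly cleaner. Your uniqueness argument for $H=g-a$ (matching coefficients in degrees $\ge 2^{k-1}$, using that $2$ is a unit) is exactly what justifies the square-root step, and the shift-absorption observation correctly propagates the structural hypothesis through the recursion.
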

\begin{proof}
	We need to write $f$ in the form $f = g_1 \circ g_2 \circ \cdots \circ g_n
	\circ \ell$, where $g_i = x^2 - a_i$ and $\ell = x - b$, with $a_i$ and $b$ in
	$\vF_q$.

	For this, suppose we have $F = G \circ H$ with $G = x^2 - a$, where $a \in
	\vF_q$, and $H$ is a polynomial of degree $d \ge 1$.
	Assume $F$ is known, and we seek $G$ and $H$.
	We apply the algorithm \texttt{Univariate decomposition}
	from~\cite[Section~2]{bib:von1990functional} with $r = 2$.
	This gives us the unique $\tilde{G}$ of degree $2$ and $\tilde{H}$ of degree
	$d$ with $\tilde{H}(0) = 0$ such that $F = \tilde{G} \circ \tilde{H}$.
	Clearly, setting $c = H(0)$, we have $\tilde{H} = H - c$ and $\tilde{G} = G(x
	+ c) = x^2 + 2cx + c^2 - a$.
	From this, it is easy to recover $c$, $a$, $G$ and $H$.
	The algorithm \texttt{Univariate decomposition} takes time $O(d \log(d)^2
	8^{\log^*(d)})$, again using the multiplication algorithm
	from~\citep{bib:harvey2017faster}.

	Applying this repeatedly to the $f$ from the theorem will find the
	decomposition $f = g_1 \circ g_2 \circ \cdots \circ g_n \circ \ell$ in $O(2^n
	n^2 8^{\log^*(2^n)})$ time, which can then be checked for irreducibility with
	the automaton in time $O(n)$. Again, for fixed $q$, constructing the automaton has constant complexity, independently of the degree of the polynomial we are testing.
\end{proof}

\begin{remark}
For fixed $q$, testing irreducibility for a polynomial of degree $2^n$ using for example Rabin's test \cite{bib:rabintest} with fast polynomial operations costs $O(n4^{n})$. 
\end{remark}

\begin{remark}
As we already mentioned, the whole point is that our algorithm is very efficient
in the regime of small fixed $q$ and large $n$.
Let us nonetheless have a quick look at the complexity with regards to $q$.
If we follow the algorithm as described in
Corollary~\ref{cor:irred_test_decomp} directly, the complexity appears to be
exponential in $q$.
In particular, we can expect the subset construction step  to take
exponential time and space.

Fortunately, it is not in fact necessary to construct the entire automaton to
execute the above algorithm.
Instead, we can solely construct the interim automaton $\mathcal N$ from
Theorem~\ref{thm:irred_regular}.
This takes $O(q^2)$ field operations, and has to be done only once for each $q$.
Then, given the decomposition of the polynomial into $f_1 \circ \cdots\circ f_n$ we can run a word $f_1 \cdots f_n$ through $\mathcal N$ directly as
follows:
define $S_0$ as the set of accepting states of $\mathcal N$.
Then, for $i$ from $1$ to $n$, let $S_i$ be the set of all states $t$ such that
there is a $s \in S_{i-1}$ and a transition $t \xrightarrow{f_i} s$ in $\mathcal
N$.
If at some point $S_i$ does not contain the initial state $\init$, we reject the
word.
Otherwise, we accept.

This method mirrors the reversal and subset construction from
Theorem~\ref{thm:irred_regular}, except that only the parts that are actually
used are computed.
The complexity of this algorithm is easy to determine:
when computing $S_i$ from $S_{i-1}$, we can iterate over all states $t$ of the
automaton and check whether the unique outgoing transition labelled $f_i$ ends
in $S_{i-1}$.
Hence, each step takes only $O(q)$ field operations, for a total cost of $O(q^2
+ nq)$.
Since for small $q$ the quantity $2^n	n^2 8^{\log^*(2^n)}$ dominates $q^2+nq$, the
complexity in terms of $\mathbb F_q$-operations remains unchanged.
\end{remark}

\section{Irreducible compositions over local fields}

In this final section, we will show how the results of the previous sections can be lifted, under some additional hypothesis, to polynomials over local fields. Let $K$ be a non-archimedean local field with finite residue field $\vF_q$ of odd characteristic. Let $\cO_K$ be its ring of integers and $\varpi$ be a uniformizer.  We will denote by $\widetilde{\cdot}$ the reduction map $\cO_K[x]\to \vF_q[x]$. Let us start by recalling the following lemma, which we state in a weaker form, sufficient for our purposes.
\begin{lemma}\label{rafe_lemma}
Let $L$ be any field and let $f,g\in L[x]$ be monic polynomials with $f=(x-a_f)^2-b_f$ for some $a_f,b_f\in L$. Then we have:
\[\disc(g\circ f)=\pm \disc(g)^{2}\cdot 4^{\deg g}\cdot g(-b_f).\]
\end{lemma}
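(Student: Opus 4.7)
The plan is to compute the discriminant directly from its defining formula in terms of roots and then group the differences according to how the roots of $g\circ f$ sit in fibres over the roots of $g$.

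Concretely, let $d=\deg g$ and let $\beta_1,\dots,\beta_d$ be the roots of $g$ in an algebraic closure of $L$. For each $i$ choose $\gamma_i$ with $\gamma_i^2=b_f+\beta_i$; then the $2d$ roots of $g\circ f$ are exactly $a_f\pm\gamma_i$ for $i=1,\dots,d$, because $g\circ f(x)=0$ forces $f(x)=\beta_i$ for some $i$, i.e.\ $(x-a_f)^2=b_f+\beta_i$. Since both $g$ and $g\circ f$ are monic, the discriminant equals the product of the squared differences of the roots, so I would expand
\[
\disc(g\circ f)=\prod_{\{r,r'\}}(r-r')^2
\]
by splitting the unordered pairs of roots into three types: (a) the $d$ "vertical" pairs $\{a_f+\gamma_i,a_f-\gamma_i\}$, contributing $\prod_i(2\gamma_i)^2=4^d\prod_i(b_f+\beta_i)$; (b) the pairs with the same sign, contributing $\prod_{i<j}(\gamma_i-\gamma_j)^2$ twice (once for $+$, once for $-$); and (c) the pairs with opposite signs and distinct indices, contributing $(\gamma_i+\gamma_j)^2$ for each of the $d(d-1)$ unordered pairs, i.e.\ $\prod_{i<j}(\gamma_i+\gamma_j)^4$.

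Multiplying (b) and (c) gives $\prod_{i<j}\bigl((\gamma_i-\gamma_j)(\gamma_i+\gamma_j)\bigr)^4=\prod_{i<j}(\gamma_i^2-\gamma_j^2)^4=\prod_{i<j}(\beta_i-\beta_j)^4=\disc(g)^2$, using that $g$ is monic. Combining this with the (a)-contribution and rewriting
\[
\prod_{i=1}^d(b_f+\beta_i)=(-1)^d\prod_{i=1}^d\bigl((-b_f)-\beta_i\bigr)=(-1)^d\,g(-b_f),
\]
I obtain $\disc(g\circ f)=(-1)^d\cdot 4^d\cdot g(-b_f)\cdot\disc(g)^2$, which is the desired identity up to the sign $\pm$.

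The main thing to be careful about is the combinatorial bookkeeping in step (c): one must not double-count the opposite-sign pairs nor confuse ordered and unordered pairs, since a slip there changes the exponent of $(\gamma_i+\gamma_j)$ and destroys the clean cancellation with $(\gamma_i-\gamma_j)$. Everything else is symbolic manipulation, and the result is sign-ambiguous only because of the $(-1)^d$ factor, which the statement absorbs into the $\pm$.
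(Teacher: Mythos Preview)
Your argument is correct. The combinatorics in step (c) are fine: for $i\neq j$ the unordered pairs $\{a_f+\gamma_i,\,a_f-\gamma_j\}$ and $\{a_f+\gamma_j,\,a_f-\gamma_i\}$ are genuinely different pairs of roots, so there are indeed $d(d-1)$ such pairs and each unordered index pair $\{i,j\}$ contributes $(\gamma_i+\gamma_j)^4$, exactly matching the $(\gamma_i-\gamma_j)^4$ from step (b). The pair count $d+2\binom{d}{2}+d(d-1)=\binom{2d}{2}$ confirms nothing is missed or double-counted, and your final line even pins down the sign as $(-1)^d$.

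As for comparison with the paper: the paper does not give its own proof of this lemma at all; it simply cites Jones \cite[Lemma~2.6]{bib:jones}. So your route is not merely different but strictly more self-contained. Jones's argument (in the cited reference) proceeds via the resultant identity $\disc(g\circ f)=\pm\operatorname{Res}(g\circ f,(g\circ f)')$ together with multiplicativity of resultants and the chain rule $(g\circ f)'=f'\cdot(g'\circ f)$, which avoids naming the roots explicitly and makes the separable/inseparable edge cases automatic. Your root-by-root computation is more elementary and transparent, at the cost of implicitly working in an algebraic closure and listing roots with multiplicity; either way one lands on the same formula.
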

\begin{proof}
 See \cite[Lemma 2.6]{bib:jones}.
\end{proof}

\begin{theorem}\label{local_thm}
 Let $f_1,\ldots,f_k\in \cO_K[x]$ be monic polynomials of degree 2 such that $\varpi \nmid \disc(f_1)$. Then $f_1\circ\cdots\circ f_k$ is irreducible in $K[x]$ if and only if $\widetilde{f_1}\circ\cdots\circ \widetilde{f_k}$ is irreducible in $\vF_q[x]$.
\end{theorem}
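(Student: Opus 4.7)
The plan is to proceed by induction on $k$, reducing at each step to a square/non-square question that Hensel's lemma can transport between $K$ and $\vF_q$. For the base case $k=1$, write $f_1 = (x-a_1)^2 - b_1$, so $\disc(f_1) = 4b_1$. Since the residue characteristic is odd we have $\varpi \nmid 4$, and the assumption $\varpi \nmid \disc(f_1)$ forces $b_1 \in \cO_K^\times$. Hensel's lemma applied to $x^2 - b_1$ then shows that $b_1$ is a square in $K$ if and only if $\widetilde{b_1}$ is a square in $\vF_q$, which is precisely the content of $f_1$ being irreducible over $K$ if and only if $\widetilde{f_1}$ is irreducible over $\vF_q$.

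For the inductive step, set $d = 2^{k-1}$ and write $F_k := f_1 \circ \cdots \circ f_k = F_{k-1} \circ f_k$ with $F_{k-1} = f_1 \circ \cdots \circ f_{k-1}$. By Capelli's Lemma (Lemma~\ref{thm:capelli}), applied once over $K$ and once over $\vF_q$, the irreducibility of $F_k$ (resp.\ $\widetilde{F_k}$) is equivalent to that of $F_{k-1}$ (resp.\ $\widetilde{F_{k-1}}$) together with the irreducibility of $f_k - \beta$ over $K(\beta)$ (resp.\ $\widetilde{f_k} - \widetilde{\beta}$ over $\vF_{q^d}$), where $\beta \in \overline{K}$ is a root of $F_{k-1}$. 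The inductive hypothesis, applied to the length $k-1$ composition $f_1 \circ \cdots \circ f_{k-1}$ (for which $\varpi \nmid \disc(f_1)$ still holds), identifies the inner irreducibility conditions on the two sides; we may thus assume henceforth that both $F_{k-1}$ and $\widetilde{F_{k-1}}$ are irreducible. Since an irreducible polynomial over a finite field is automatically separable, it follows that $L := K(\beta)$ is an unramified extension of $K$ of degree $d$, its residue field is $\vF_{q^d}$, and $\widetilde{\beta}$ is the reduction of $\beta$. Writing $f_k - \beta = (x - a_k)^2 - (b_k + \beta)$, the outstanding irreducibility condition then reduces to whether $b_k + \beta$ is a non-square in $L$, and similarly over $\vF_{q^d}$ for $\widetilde{b_k} + \widetilde{\beta}$.

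The main obstacle is to ensure $b_k + \beta \in \cO_L^\times$, so that Hensel's lemma in $L$ can transport non-squareness across the reduction. For this one must rule out $\widetilde{b_k} + \widetilde{\beta} = 0$; but such an equality would force $\widetilde{\beta} = -\widetilde{b_k} \in \vF_q$, contradicting the fact that $\widetilde{\beta}$ generates $\vF_{q^d}$ over $\vF_q$ (since $\widetilde{F_{k-1}}$ is irreducible of degree $d \ge 2$). Once unit-ness is secured, Hensel applied to $x^2 - (b_k + \beta)$ in $\cO_L$ yields the equivalence $b_k + \beta \in (L^\times)^2 \Leftrightarrow \widetilde{b_k} + \widetilde{\beta} \in (\vF_{q^d}^\times)^2$, closing the induction. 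This also explains why only $\varpi \nmid \disc(f_1)$ is needed: the assumption is consumed in the base case, while the inductive step relies purely on residue-side data controlled by the inductive hypothesis.
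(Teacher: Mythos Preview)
Your proof is correct and proceeds by a genuinely different route than the paper's. The paper argues the non-trivial direction by contradiction: assuming $F_k$ irreducible over $K$, it uses Proposition~\ref{thm:irred_long} to reduce to showing that each $c_t=(\widetilde{f_1}\circ\cdots\circ\widetilde{f_{t-1}})(-\widetilde{b_t})$ is a nonsquare, and then invokes the discriminant formula (Lemma~\ref{rafe_lemma}) to show that $\varpi\nmid\disc(F_t)$, so that $K[x]/(F_t)$ is unramified and hence $\widetilde{F_t}$ is irreducible, contradicting any square $c_t$. By contrast, you run a clean symmetric induction via Capelli, establishing unramifiedness of $K(\beta)/K$ directly from the inductive hypothesis that $\widetilde{F_{k-1}}$ is irreducible (forcing full residue degree), and then transporting the single square/non-square condition $b_k+\beta$ across the reduction by Hensel. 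Your argument is more self-contained in that it avoids Lemma~\ref{rafe_lemma} entirely; the paper's argument, on the other hand, makes the discriminant computation explicit and ties the result more visibly to the machinery of Section~\ref{sec:automaton_irreducible}. One small point worth making explicit in your write-up: when you say ``we may thus assume henceforth that both $F_{k-1}$ and $\widetilde{F_{k-1}}$ are irreducible,'' you are implicitly using that if either is reducible then (by the inductive hypothesis) both are, whence both $F_k$ and $\widetilde{F_k}$ are reducible and the equivalence is trivial.
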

 \begin{proof}
    
   One direction is obvious, so let us assume that $f_1\circ\cdots\circ f_k$ is irreducible. For every $i=1,\ldots,k$, let $f_i=(x-a_i)^2-b_i$ for some $a_i,b_i\in \cO_K$. By Proposition~\ref{thm:irred_long}, we need to show that the following elements are not squares:
   \begin{itemize}
		\item $c_1\coloneqq\widetilde{b_1}$
		\item $c_2\coloneqq\widetilde{f_1}(\widetilde{-b_2})$
		\item[] $\vdots$
		\item $c_k\coloneqq(\widetilde{f_1} \circ \cdots \circ \widetilde{f_{k-1}})(\widetilde{-b_k})$.
	\end{itemize}
    First, suppose that $c_t=0$ for some $t\in \{1,\ldots,k\}$. This implies that $\widetilde{f_1}$ has a root, and since by hypothesis the discriminant of $\widetilde{f_1}$ is non-zero, by Hensel's lemma we can lift such a root to a root of $f_1$. But then $f_1\circ\cdots\circ f_k$ is clearly reducible, which is a contradiction. Thus we can assume that $c_i\neq 0$ for all $i\in\{1,\ldots,k\}$. Now let $t\in \{1,\ldots,k\}$ be such that $c_t$ is a non-zero square. By Proposition~\ref{thm:irred_long}, this implies that $\widetilde{f_1}\circ\cdots\circ \widetilde{f_t}$ is reducible. On the other hand, applying Lemma~\ref{rafe_lemma} recursively and using the definition of the $c_i$'s we get that:
	\[\disc(\widetilde{f_1}\circ\cdots\circ \widetilde{f_t})=u \cdot\prod_{i=1}^{t}c_i^{2^{t-i}}\neq 0,\]
    where $u$ is an appropriate power of 2 (up to sign). This proves that $\varpi \nmid \disc(f_1\circ\cdots\circ f_t)$ and since $f_1\circ\cdots\circ f_t$ is irreducible by hypothesis, it defines an unramified extension of $K$. It follows that $\widetilde{f_1}\circ\cdots\circ \widetilde{f_t}$ is irreducible (see for example~\cite[Chapter~7]{bib:cassels}), giving a contradiction.
 \end{proof}
 It is clear that the hypothesis that $\varpi\nmid \disc(f_1)$ is necessary for the claim to hold, since for example $x^2-\varpi$ is irreducible in $K[x]$, while its reduction is reducible in $\vF_q[x]$.
 
 Given a finite set $\cS\subseteq \cO_K[x]$ of monic polynomials of degree two with unitary discriminant, Theorem \ref{local_thm} shows that irreducible compositions of the elements of $\cS$ correspond bijectively to irreducible compositions of the elements of $\widetilde{\cS}\subseteq \vF_q[x]$. Therefore, if we consider $\cS$ as an alphabet and $\Lirr$ is the language of irreducible compositions of the elements of $\cS$, we deduce immediately the following corollary.
 \begin{corollary}
  The language $\Lirr$ is regular.
 \end{corollary}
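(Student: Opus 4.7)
The plan is to transfer the finite-field result of Theorem~\ref{thm:irred_regular} to the local-field setting by reducing everything modulo $\varpi$, using Theorem~\ref{local_thm} as the bridge. The crucial point is that the hypothesis $\varpi \nmid \disc(f_1)$ required by Theorem~\ref{local_thm} is automatically satisfied for \emph{any} word $f_1\cdots f_k \in \cS^*$, because by assumption every element of $\cS$ has unit discriminant. Thus the hypothesis is stable under arbitrary compositions arising from $\cS$.

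Concretely, since each $f \in \cS$ is monic of degree two with unit discriminant, its reduction $\widetilde{f}$ remains monic of degree two. The assignment $f \mapsto \widetilde{f}$ therefore extends to a monoid morphism $\phi \colon \cS^* \to \widetilde{\cS}^*$. Let $\mathcal{J} \subseteq \widetilde{\cS}^*$ be the language of words whose corresponding composition in $\vF_q[x]$ is irreducible. Applying Theorem~\ref{thm:irred_regular} to the alphabet $\widetilde{\cS}$ shows that $\mathcal{J}$ is regular, with an explicit accepting automaton given by the construction in Definition~\ref{def:automaton_nonsquare} together with the subset-construction step in the proof of Theorem~\ref{thm:irred_regular}. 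Theorem~\ref{local_thm} then says that a word $w \in \cS^*$ lies in $\Lirr$ if and only if $\phi(w) \in \mathcal{J}$; equivalently, $\Lirr = \phi^{-1}(\mathcal{J})$.

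To finish, I would invoke the standard closure of regular languages under inverse monoid morphisms, which can be made explicit here: starting from the DFA $\Airr$ over $\widetilde{\cS}$, one obtains a DFA accepting $\Lirr$ on the alphabet $\cS$ by keeping the same state set and replacing every transition $q \xrightarrow{\widetilde{f}} q'$ with a transition $q \xrightarrow{g} q'$ for each $g \in \cS$ satisfying $\widetilde{g} = \widetilde{f}$. There is no genuine obstacle here: all the difficulty is packed into Theorem~\ref{local_thm}, and the remaining argument reduces to an alphabet relabelling of a regular language.
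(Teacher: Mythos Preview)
Your argument is correct and follows essentially the same route as the paper: reduce modulo $\varpi$, invoke Theorem~\ref{local_thm} to identify $\Lirr$ over $\cS$ with the irreducibility language over $\widetilde{\cS}$, and then apply Theorem~\ref{thm:irred_regular}. You are simply more explicit than the paper about one point the latter leaves tacit, namely that the map $\cS \to \widetilde{\cS}$ need not be injective, so that $\Lirr = \phi^{-1}(\mathcal{J})$ is an inverse image under an alphabet morphism rather than a mere relabelling; your description of how to pull back the automaton handles this cleanly.
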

 \begin{proof}
  It is enough to apply Theorem~\ref{thm:irred_regular} to the language of irreducible compositions of the elements of $\widetilde{\cS}$.   
 \end{proof}
 The above corollary essentially states that the theory we developed in the rest of the paper lifts entirely to local fields, at least in the case in which the elements in $\cS$ have unitary discriminant. 
It would be interesting to understand what happens when this condition is not satisfied.
\section{Further research}
One of the natural questions arising from the results in the present paper is whether Theorem~\ref{thm:disjoint} can be generalised to higher degree polynomials. In fact any lift of such results to polynomials of degree three or more would be of great interest, in particular because the necessary and sufficient criterion by Boston and Jones~\citep{bib:jones2012settled} (and the subsequent results on the subject such as~\citep{bib:ahmadi2009note,bib:ahmadi2012stable,ferraguti2016sets,bib:gomez2010estimate,bib:heathbrownmicheli}) only exists in degree two. 
In the context of local fields, another interesting issue arising from Theorem \ref{local_thm} of this paper is the following: how can one include singular polynomials in the generating set $\cS$? In fact, the condition on the discriminant seems to be essential.
Another question arising from these results is whether it is possible to explicitly compute the generating function of the language of irreducible compositions just in terms of the coefficient of the generating polynomials. It is indeed possible to compute such a function just in terms of the obtained automata, but this would drop most of the available information. In particular, it seems that the key ingredient to address this issue is to understand the structure of the finite submonoid of maps from $\vF_q$ to $\vF_q$ which can be written as composition of degree two polynomials. 
More generally, many of the questions and constructions which were related just to the compositions of a single polynomial, now seem to naturally arise in this more general context, were the rigidity of finite automata theory provides assistance.

\section{Acknowledgements}

The first author was supported by Swiss National Science Foundation grant number 168459. The Second Author is thankful to Swiss National Science Foundation grant number 161757 and 171248.

The authors are thankful to the anonymous referee for his suggestions, which greatly improved the content and the readability of the paper. We are expecially thankful for suggesting a reference which boiled down the complexity in Corollary \ref{cor:irred_test_decomp} from $O(4^n)$ to $O(2^n	n^2 8^{\log^*(2^n)})$.

\bibliographystyle{plain}
\bibliography{biblio}

\providecommand{\noopsort}[1]{}
\begin{thebibliography}{10}

\bibitem{bib:ahmadi2009note}
Omran Ahmadi.
\newblock A note on stable quadratic polynomials over fields of characteristic
  two.
\newblock \url{https://arxiv.org/abs/0910.4556}, 2009.

\bibitem{bib:ahmadi2012stable}
Omran Ahmadi, Florian Luca, Alina Ostafe, and Igor~E. Shparlinski.
\newblock On stable quadratic polynomials.
\newblock {\em Glasgow Mathematical Journal}, 54(02):359--369, 2012.

\bibitem{bib:andrade2013special}
Julio~C. Andrade, Steven~J. Miller, Kyle Pratt, and Minh-Tam Trinh.
\newblock Special sets of primes in function fields.
\newblock {\em INTEGERS}, 13:2, 2013.

\bibitem{bib:cassels}
J.~W.~S. Cassels.
\newblock {\em Local fields}, volume~3 of {\em London Mathematical Society
  Student Texts}.
\newblock Cambridge University Press, Cambridge, 1986.

\bibitem{bib:ferraguti2016existence}
Andrea Ferraguti and Giacomo Micheli.
\newblock On the existence of infinite, non-trivial {F}-sets.
\newblock {\em Journal of Number Theory}, 168:1--12, 2016.

\bibitem{ferraguti2016sets}
Andrea Ferraguti, Giacomo Micheli, and Reto Schnyder.
\newblock On sets of irreducible polynomials closed by composition.
\newblock In {\em Arithmetic of finite fields}, volume 10064 of {\em Lecture
  Notes in Computer Science}, pages 77--83. Springer, Cham, 2016.

\bibitem{bib:von1990functional}
Joachim {\noopsort{Gathen}}von~zur Gathen.
\newblock Functional decomposition of polynomials: the tame case.
\newblock {\em Journal of Symbolic Computation}, 9(3):281--299, 1990.

\bibitem{bib:gomez2010estimate}
Domingo Gomez and Alejandro~P. Nicol{\'a}s.
\newblock An estimate on the number of stable quadratic polynomials.
\newblock {\em Finite Fields and Their Applications}, 16(6):401--405, 2010.

\bibitem{bib:gomez2017complexity}
Domingo G{\'o}mez-P{\'e}rez, L{\'a}szl{\'o} M{\'e}rai, and Igor~E. Shparlinski.
\newblock On the complexity of exact counting of dynamically irreducible
  polynomials.
\newblock \url{https://arxiv.org/abs/1706.04392}, 2017.

\bibitem{bib:harvey2017faster}
David Harvey, Joris Van~Der Hoeven, and Gr{\'e}goire Lecerf.
\newblock Faster polynomial multiplication over finite fields.
\newblock {\em Journal of the ACM (JACM)}, 63(6):52, 2017.

\bibitem{bib:heathbrownmicheli}
D.~R. Heath-Brown and Giacomo Micheli.
\newblock Irreducible polynomials over finite fields produced by composition of
  quadratics.
\newblock {\em Revista Matem\'atica Iberoamericana (to appear)}, 2017.
\newblock ArXiv preprint: \url{https://arxiv.org/abs/1701.05031}.

\bibitem{bib:hopcroftintroduction}
John~E. Hopcroft, R.~Motwani, and Jeffrey~D. Ullman.
\newblock {\em Introduction to automata theory, languages, and computation}.
\newblock Addison Wesley, Boston, MA, 2001.

\bibitem{jones2007iterated}
Rafe Jones.
\newblock Iterated galois towers, their associated martingales, and the $ p
  $-adic mandelbrot set.
\newblock {\em Compositio Mathematica}, 143(05):1108--1126, 2007.

\bibitem{jones2008density}
Rafe Jones.
\newblock The density of prime divisors in the arithmetic dynamics of quadratic
  polynomials.
\newblock {\em Journal of the London Mathematical Society}, 78(2):523--544,
  2008.

\bibitem{bib:jones}
Rafe Jones.
\newblock The density of prime divisors in the arithmetic dynamics of quadratic
  polynomials.
\newblock {\em J. Lond. Math. Soc. (2)}, 78(2):523--544, 2008.

\bibitem{bib:jones2012iterative}
Rafe Jones.
\newblock An iterative construction of irreducible polynomials reducible modulo
  every prime.
\newblock {\em Journal of Algebra}, 369:114--128, 2012.

\bibitem{bib:jones2012settled}
Rafe Jones and Nigel Boston.
\newblock Settled polynomials over finite fields.
\newblock {\em Proceedings of the American Mathematical Society},
  140(6):1849--1863, 2012.

\bibitem{bib:ostafe2010length}
Alina Ostafe and Igor~E. Shparlinski.
\newblock On the length of critical orbits of stable quadratic polynomials.
\newblock {\em Proceedings of the American Mathematical Society},
  138(8):2653--2656, 2010.

\bibitem{bib:rabintest}
Michael~O. Rabin.
\newblock Probabilistic algorithms in finite fields.
\newblock {\em SIAM Journal on Computing}, 9(2):273--280, 1980.

\end{thebibliography}

\end{document}